\newcommand{\rr}[1]{\mathbf R^{#1}}
\newcommand{\nn}[1]{\mathbf N^{#1}}
\newcommand{\cc}[1]{\mathbf C^{#1}}
\newcommand{\nm}[2]{\Vert #1\Vert _{#2}}
\newcommand{\sets}[2]{\{ \, #1\, ;\, #2\, \} }
\newcommand{\ep}{\varepsilon}
\newcommand{\fy}{\varphi}
\newcommand{\norm}[1]{\Vert#1\Vert }
\newcommand{\eabs}[1]{\langle #1\rangle}
\newcommand{\vrum}{\vspace{0.1cm}}
\newcommand{\maclJ}{\mathcal J}
\newcommand{\maclA}{\mathcal A}
\newcommand{\maclH}{\mathcal H}
\newcommand{\maclS}{\mathcal S}
\newcommand{\mascS}{\mathscr S}
\newcommand{\bsySig}{\boldsymbol \Sigma}
\newcommand{\bsycalS}{\boldsymbol {\mathcal S}}
\newtheorem{thm}{Theorem}
\newtheorem{prop}[thm]{Proposition}
\newtheorem{lemma}[thm]{Lemma}
\theoremstyle{definition}
\theoremstyle{remark}
\newtheorem{rem}[thm]{Remark}              %T o m hit �r bara allm�n
\author{Carmen Fern\'andez}
\address{Departament d' An\`{a}lisi Matem\`{a}tica, Universitat de Val\`{e}ncia, Valencia, Spain}
\email{fernand@uv.es}
\author{Antonio Galbis}
\address{Departament d' An\`{a}lisi Matem\`{a}tica, Universitat de Val\`{e}ncia, Valencia, Spain}
\email{antonio.galbis@uv.es}
\author{Joachim Toft}
\address{Department of Mathematics,
Linn{\ae}us University, V{\"a}xj{\"o}, Sweden}
\email{joachim.toft@lnu.se}
\thanks{The research of C. Fern\'andez and A. Galbis was partially supported by the projects  MTM2013-43540-P and GVA Prometeo II/2013/013 (Spain).}
\title{The Bargmann transform and powers of harmonic oscillator on
Gelfand-Shilov subspaces}
\begin{document}

\begin{abstract}
We consider the counter images $\maclJ (\rr d)$ and $\maclJ _0(\rr d)$
of entire functions with exponential and almost
exponential bounds, respectively, under the Bargmann transform, and we characterize them by estimates of powers of the harmonic
oscillator. We also consider the Pilipovi{\'c} spaces $\bsycalS _s(\rr d)$
and $\bsySig _s(\rr d)$ when $0<s<1/2$ and  deduce their images under the Bargmann transform.
\end{abstract}

\maketitle

\section{Introduction}\label{sec0}

\par

The aim of the paper is to characterize the images of the 
Pilipovi{\'c} spaces $\bsySig _s(\rr d)$ and $\bsycalS _{\! s}(\rr d)$
under the Bargmann transform when $s< 1/2$, as well as the test
function spaces $\maclJ _0(\rr d)$
and $\maclJ (\rr d)$\footnote{In the latest version of \cite{Toft14}, the spaces
$\maclJ _0(\rr d)$ and $\maclJ (\rr d)$ are denoted by $\maclH _{0,\flat}(\rr d)$
and $\maclH _\flat (\rr d)$, respectively.}, considered in \cite{Toft14},
in terms of estimates of powers of the harmonic oscillator. The set
$\maclJ (\rr d)$ consists of all $f\in \mascS (\rr d)$ such that their
Hermite series expansions are given by
\begin{equation}\label{fHermite}
f=\sum _{\alpha \in \nn d}c_\alpha (f)h_\alpha ,
\end{equation}
where
\begin{equation}\label{JHermCoeffCond}
|c_\alpha (f)|\le \frac {Cr^{|\alpha |}}{\sqrt {\alpha !}},
\end{equation}
for some constants $r>0$ and $C>0$. (See Section \ref{sec1} for notations.)
In the same way, $f$ belongs to $\maclJ _0(\rr d)$, if and only
if for every $r>0$, there is a constant $C>0$ such that \eqref{JHermCoeffCond}
holds.

\par

The sets $\maclJ _0(\rr d)$ and $\maclJ (\rr d)$ are small in the sense that
they are continuously embedded in the Schwartz space $\mascS (\rr d)$
and its subspaces $\maclS _{s_1}(\rr d)$ and $\Sigma _{s_2}(\rr d)$ when
$s_1\ge 1/2$ and $s_2>1/2$. Here $\maclS _s(\rr d)$ and $\Sigma _s(\rr d)$
are the sets of Gelfand-Shilov spaces of Roumieu and Beurling types, respectively,
of order $s\ge 0$ on $\rr d$. 
%On the other hand, in \cite{Pil}, Pilipovi{\'c} introduce
%a family of
The function spaces $\bsycalS _{\! s}(\rr d)$ $\bsySig _s(\rr d)$, 
%increasing
increase
with the parameter $s\ge 0$, and
%such that
the following holds true (see \cite{Toft14} for
the verifications):
\begin{itemize}%[leftmargin=*]
\item $\bsycalS _{\! s}(\rr d)$ is non-trivial for every $s\ge 0$ and
$\bsycalS _0(\rr d)$
consists of all finite linear combinations of Hermite functions.
Furthermore, $\bsycalS _{\! s}(\rr d)=\maclS _s(\rr d)$ when $s\ge 1/2$, and
$\bsycalS _{\! s}(\rr d)\neq \maclS _s(\rr d) = \{ 0\}$ when $s< 1/2$;

\vrum

\item $\bsySig _s(\rr d)$ is non-trivial if and only if $s> 0$. Furthermore,
$\bsySig _s(\rr d)=\Sigma _s(\rr d)$ when $s> 1/2$, and
$\bsySig _s(\rr d)\neq \Sigma _s(\rr d) = \{ 0\}$ when $s\le 1/2$;

\vrum

\item For every
$$
\ep >0,\quad s\ge 0,\quad s_1<\frac 12 \quad \text{and}\quad
s_2\ge \frac 12,
$$
the inclusions
\begin{equation}\label{Eq:InclusionSpaces}
\begin{aligned}
\bsySig _s(\rr d)&\subseteq \bsycalS _{\! s}(\rr d)\subseteq \bsySig _{s+\ep}(\rr d)\subseteq \mascS (\rr d)
\\[1ex]
\text{and}\quad
\bsycalS _{\!  s_1}(\rr d)&\subseteq \maclJ _0(\rr d)\subseteq \maclJ (\rr d)
\subseteq \bsySig _{s_2}(\rr d)
\end{aligned}
\end{equation}
are continuous and dense. A similar fact holds for corresponding distribution spaces,
after the inclusions have been reversed.
\end{itemize}
\par

The spaces in \eqref{Eq:InclusionSpaces} and their duals have in most
of the cases,
convenient images under the Bargmann transform. In fact, in view of
\cite{Toft14} it is
proved that the Bargmann transform $\mathfrak V_d$ is injective on
all these spaces and their duals, and, among others, that 
\begin{align*}
\mathfrak V_d(\maclJ _0(\rr d)) &= \sets {F\in A(\cc d)}{|F(z)|\lesssim e^{R|z|}\
\text{for every}\ R>0},
\\[1ex]
\mathfrak V_d(\maclJ (\rr d)) &= \sets {F\in A(\cc d)}{|F(z)|\lesssim e^{R|z|}\
\text{for some}\ R>0},
\\[1ex]
\mathfrak V_d(\bsySig _{\frac 12}(\rr d)) &= \sets {F\in A(\cc d)}{|F(z)|\lesssim
e^{R|z|^2}, \ \text{for every}\ R>0},
\\[1ex]
\mathfrak V_d(\bsySig _{\frac 12}'(\rr d)) &= \sets {F\in A(\cc d)}{|F(z)|\lesssim
e^{R|z|^2}, \ \text{for some}\ R>0},
\\[1ex]
\mathfrak V_d(\maclJ '(\rr d)) &= A(\cc d),
\\[1ex]
\mathfrak V_d(\maclJ _0'(\rr d)) &= \bigcup _{r>0}A(B_r(0)).
\end{align*}
(See \cite{Toft14} for more comprehensive lists of mapping properties of the spaces in
\eqref{Eq:InclusionSpaces} and their duals under the Bargmann transform.) 
Here $A(\Omega )$ is the set of all analytic functions on the open set
$\Omega \subseteq \cc d$, and $B_r(z_0)$ is the open ball with center at
$z_0\in \cc d$ and radius $r>0$. We remark that one of the reasons for
considering $\maclJ _0(\rr d)$ and $\maclJ (\rr d)$ is that $\maclJ _0'(\rr d)$
and $\maclJ '(\rr d)$ possess the mapping properties
under the Bargmann transform given here above.

\par

In Section \ref{sec3} we make the list here above more complete by proving that
$$
\mathfrak V_d(\bsySig _{s}(\rr d)) = \sets {F\in A(\cc d)}{|F(z)|\lesssim
e^{R(\log \eabs z)^{\frac 1{1-2s}}}, \ \text{for every}\ R>0},
$$
and
$$
\mathfrak V_d(\bsycalS _{\! s}(\rr d)) = \sets {F\in A(\cc d)}{|F(z)|\lesssim
e^{R(\log \eabs z)^{\frac 1{1-2s}}}, \ \text{for some}\ R>0},
$$ when $0 < s < 1/2.$

\par

%%%%%%%%%%%%%%%%%%%%%%%%%%%%%
\section{Preliminaries}\label{sec1}
%%%%%%%%%%%%%%%%%%%%%%%%%%%%%

\par

In this section we recall some basic facts. We start by discussing
Pilipovi{\'c} and Gelfand-Shilov spaces and some of their properties. Finally
we recall the Bargmann transform and some of its
mapping properties.

\par

%We start by recalling some facts on Gelfand-Shilov spaces.
%For any
Let $0<h,s,t\in \mathbf R$. Then $\mathcal S_{s,h}(\rr d)$ consists of all
$f\in C^\infty (\rr d)$ such that
\begin{equation}\label{gfseminorm}
\nm f{\mathcal S_{s,h}}:= \sup \frac {|x^\beta \partial ^\alpha
f(x)|}{h^{|\alpha  + \beta |}(\alpha !\, \beta !)^s}
\end{equation}
is finite. The \emph{Gelfand-Shilov spaces} $\mathcal S_s(\rr d)$ 
and $\Sigma _s(\rr d)$, of Roumieu and Beurling 
types respectively, are the sets
\begin{equation}\label{GSspacecond1}
\mathcal S_s(\rr d) = \bigcup _{h>0}\mathcal S_{s,h}(\rr d)
\quad \text{and}\quad \Sigma _s(\rr d) =\bigcap _{h>0}
\mathcal S_{s,h}(\rr d),
\end{equation}
with inductive and projective topologies, respectively.
We remark that 
$\Sigma _s(\rr d)\neq \{ 0\}$, if and only if $s>1/2$, and
$\maclS _s(\rr d)\neq \{ 0\}$, if and only
if $s\ge 1/2$. We refer to \cite{ChuChuKim,GS} for
general facts about Gelfand-Shilov spaces, and their duals.

\par

Next we consider spaces which are obtained by suitable
estimates of Gelfand-Shilov or Gevrey type, after the operator
$x^\beta \partial ^\alpha $ in \eqref{gfseminorm} is replaced by
powers of the harmonic oscillator $H=|x|^2-\Delta$. More precisely,
if $s\ge 1/2$ ($s>1/2$), then Pilipovi{\'c} showed in \cite{Pil2}
that $f\in \maclS _s(\rr d)$ ($f\in \Sigma _s(\rr d)$), if and only if
\begin{equation}\label{GFHarmCond}
\sup _{N\ge 0} \frac {\nm{H^Nf}{L^\infty}}{h^N(N!)^{2s}}<\infty ,
\end{equation}
holds for some $h>0$ (for every $h>0$). (See also \cite{Pil,GrPiRo1} for more
general approaches.)
On the other hand, $\maclS _s(\rr d)$ ($\Sigma _s(\rr d)$) is empty when $s<1/2$
($s\le 1/2$), while any Hermite function $h_\alpha$ fulfills \eqref{GFHarmCond}
for some $h>0$ (for every $h>0$), when $s\ge 0$ ($s>0$).

\par

For this reason, we let $\bsycalS _{\! s}(\rr d)$ ($\bsySig _s(\rr d)$) be the set of all
$f\in C^\infty (\rr d)$ such that \eqref{GFHarmCond} holds for some $h>0$
(for every $h>0$). %, it follows that
We call $\bsycalS _{\! s}(\rr d)$ and $\bsySig _s(\rr d)$ the \emph{Pilipovi{\'c}
spaces} of Roumieu and Beurling types respectively, of order $s\ge 0$ on $\rr d$.

\par

In \cite{GrPiRo1,Pil,Toft14} there are different types of characterizations of the
Pilipovi{\'c} spaces. For example, it is here proved that $f$ belongs to
$\bsycalS _{\! s}(\rr d)$ ($\bsySig _s(\rr d)$) with $s>0$, if and only if \eqref{fHermite} holds
with
$$
|c_\alpha (f)|\lesssim e^{-r|\alpha |^{1/2s}}
$$
for some $r>0$ (for every $r>0$).

\par

%%%%%%%%%%%%%%%%%%%%%%%%%%%%%%
\section{Characterizations of $\maclJ _0(\rr d)$
and $\maclJ (\rr d)$ in terms of powers of the
harmonic oscillator}\label{sec2}
%%%%%%%%%%%%%%%%%%%%%%%%%%%%%%

\par

In the previous section, $\bsycalS _{\! s}(\rr d)$ and $\bsySig _s(\rr d)$
were defined by means of \eqref{GFHarmCond}. In this section 
 we deduce characterizations of the test function spaces
$\maclJ _0(\rr d)$ and $\maclJ (\rr d)$, in similar ways.

\par

More precisely we have the following.

\par

\begin{thm}\label{thm:mainthm1}
Let $f$ be given by \eqref{fHermite}. Then the following conditions
are equivalent:
\begin{enumerate}
\item There exists $r > 0$ such that
%\item[(a)]
$$
\left | c_\alpha (f)\right | \lesssim r^{|\alpha |}(\alpha !)^{-\frac{1}{2}}.
$$
\item There exists $r > 0$ such that
%\item[(b)]
$$
\norm {H^N f}_{L^2} \lesssim 2^N r^{\frac{N}{\log N}}
\left ( \frac{2N}{\log N} \right )^{N ( 1-\frac{1}{\log N} ) }.
$$
\end{enumerate}
\end{thm}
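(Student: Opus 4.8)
The plan is to diagonalize $H$ in the Hermite basis and thereby convert the statement into a purely sequential one. Since $Hh_\alpha=(2|\alpha|+d)h_\alpha$ and $\{h_\alpha\}$ is orthonormal in $L^2(\rr d)$, we have
$$
\norm{H^Nf}_{L^2}^2=\sum_{\alpha\in\nn d}|c_\alpha(f)|^2(2|\alpha|+d)^{2N}.
$$
Writing $e_k=\sum_{|\alpha|=k}|c_\alpha(f)|^2$ this becomes $\norm{H^Nf}_{L^2}^2=\sum_{k\ge 0}e_k(2k+d)^{2N}$, so the whole theorem reduces to comparing the growth of the moment sequence $\sum_k e_k(2k+d)^{2N}$ with $B_N^2$, where $B_N$ denotes the right-hand side in condition (2). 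For the passage between the multi-index bound in (1) and the radial sequence $e_k$ I would use the multinomial identity $\sum_{|\alpha|=k}1/\alpha!=d^k/k!$ together with $\alpha!\le|\alpha|!$: the first collapses (1) into $e_k\lesssim\rho^k/k!$ with $\rho=r^2d$, and the second lets me recover a bound on each $|c_\alpha(f)|$ from a bound on $e_{|\alpha|}$.

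First I would carry out the analytic heart of the argument: a Laplace/Stirling estimate of the single term $t_k=\frac{\rho^k}{k!}(2k+d)^{2N}$. Maximizing $\log t_k\approx k\log\rho-k\log k+k+2N\log(2k+d)$ leads to the critical equation $k\log(k/\rho)=2N$, whose leading solution is $k^\ast\sim 2N/\log N$ — exactly the quantity featured in $B_N$. Inserting $k^\ast$ and expanding with care, keeping the $N\log N$, $N\log\log N$ and $N$ terms and tracking the $\log\log N$ corrections coming from $\log(k^\ast/\rho)=\log N-\log\log N+O(1)$, I expect to find
$$
\log\Big(\max_k t_k\Big)=2\log B_N+O\!\left(\frac{N}{\log N}\right),
$$
the error being of the same order as the $\tfrac{N\log r}{\log N}$ term in $\log B_N$ and hence absorbable by a change of the constant $r$.

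With this saddle estimate in hand the two implications are dual. For (1)$\Rightarrow$(2) I would bound $\norm{H^Nf}_{L^2}^2\lesssim\sum_k t_k$; the ratio $t_{k+1}/t_k=\frac{\rho}{k+1}(1+\tfrac{2}{2k+d})^{2N}$ is increasing-then-decreasing and crosses $1$ at $k^\ast$, so the sum is controlled by its maximal term up to a factor only polynomial in $N$ (the peak has width $\sim\sqrt{N}/\log N$), and such factors are trivially swallowed by $B_N$. For (2)$\Rightarrow$(1) I would run the estimate backwards: from $e_k(2k+d)^{2N}\le\norm{H^Nf}_{L^2}^2\lesssim B_N^2$ I get $e_k\le\inf_N B_N^2(2k+d)^{-2N}$, and choosing the index $N\approx\tfrac12 k\log(k/\rho)$ that makes $k$ the saddle point turns the same computation into $e_k\lesssim\rho^k/k!$; then $|c_\alpha(f)|^2\le e_{|\alpha|}\lesssim\rho^{|\alpha|}/|\alpha|!\le\rho^{|\alpha|}/\alpha!$ yields (1) with $r=\sqrt\rho$.

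The main obstacle is precisely the sharp matching in the second paragraph: the definition of $B_N$ is engineered so that its logarithm reproduces the saddle value of $\log t_k$, and the difficulty is to invert $k\log(k/\rho)=2N$ accurately enough to confirm the $(2N/\log N)^{N(1-1/\log N)}$ and $2^N r^{N/\log N}$ factors, while checking that every discrepancy is genuinely $O(N/\log N)$ and therefore harmless given the freedom to enlarge $r$. Everything else — the spectral reduction, the multinomial collapse, and the comparison of a positive sum with its largest term — is routine.
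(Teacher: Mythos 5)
Your proposal is correct, and its first half coincides with the paper's argument: the paper likewise diagonalizes $H$ on the Hermite basis, collapses the multi-index sum to $\sum_k (2k+d)^{2N}(dr)^{2k}/k!$ via $\sum_{|\alpha|=k}1/\alpha! = d^k/k!$, and then (Lemmas \ref{lem:interval}--\ref{lem:series}) estimates that sum by the maximum of the continuous envelope $t\mapsto t^{2N}(2\rho e)^t/t^t$, whose critical point is exactly your $k\log(k/\rho)=2N$, located near $2N/\log N$; the only cosmetic difference is that the paper controls the tail by pulling out a geometric factor $(r/r_0)^{2k}$ rather than by your peak-width count, both of which lose only factors absorbable into a change of $r$.

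Where you genuinely diverge is the converse direction. You invert the saddle computation by hand, choosing $N\approx\tfrac12 k\log(k/\rho)$ for each $k$; the paper instead (Lemma \ref{lem:convex} and Proposition \ref{HermiteEstToCoeffEst}) encodes the target bound $k^{k/2}r^{-k}$ as $e^{\varphi(\log k)}$ for a convex function $\varphi$, and uses Young conjugation together with the biconjugation identity $(\varphi^*)^*=\varphi$ to perform the optimization over $N$ exactly, the only loss being a factor $k$ from restricting the supremum to integers. Your route does work, but the point you flag as the ``main obstacle'' is real and is precisely where care is needed: with $N\approx\tfrac12 k\log k$ one has $2N/\log N = k\bigl(1-\tfrac{\log\log k}{\log k}(1+o(1))\bigr)$, so the term $2N\log\bigl(\tfrac{2N/\log N}{k}\bigr)\approx -k\log\log k$ and the expansion of $-\tfrac{2N}{\log N}\log\tfrac{2N}{\log N}\approx -k\log k + k\log\log k$ each carry a $k\log\log k$ contribution, and only their cancellation leaves the required $k\log r - k\log k + o(k)$; without verifying this cancellation one would be left with a spurious $(\log k)^k$ factor, which is \emph{not} absorbable into $r^k$. (Alternatively, one can choose the integer $N$ so that $2N/\log N$ is within $O(1/\log N)$ of $k$, which makes both error terms $O(\log k)$ outright.) The paper's convex-duality formulation packages this bookkeeping automatically, which is what it buys; your version is more elementary and self-contained but must carry out the second-order expansion honestly. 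Note also that both arguments deliver the slightly weaker bound $|c_\alpha(f)|\lesssim r^{|\alpha|}|\alpha|^{-|\alpha|/2+O(1)}$, which yields condition (1) only after using $|\alpha|^{-|\alpha|/2}\le (|\alpha|!)^{-1/2}\le(\alpha!)^{-1/2}$ and enlarging $r$ --- the same absorption you invoke at the end.
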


\par

The same arguments also give the following.

\par 
 
\begin{thm}\label{thm:mainthm2}
Let $f$ be given by \eqref{fHermite}. Then the following conditions
are equivalent:
\begin{enumerate}
%\item[(a)] 
\item For every  $r > 0$,
$$
\left | c_\alpha (f) \right | \lesssim r^\alpha (\alpha !)^{-\frac{1}{2}}.
$$
%\item[(b)]
\item For every  $r > 0$,
$$
\norm{H^N f} _{L^2} \lesssim 2^N r^{\frac{N}{\log N}}
\left ( \frac{2N}{\log N} \right )^{N ( 1-\frac{1}{\log N} ) }.
$$
\end{enumerate}
\end{thm}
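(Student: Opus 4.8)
The plan is to run the very estimates behind Theorem \ref{thm:mainthm1}, but keeping track of how the parameter $r$ on one side determines the parameter on the other. The starting point is that the Hermite functions are eigenfunctions of the harmonic oscillator, $Hh_\alpha=(2|\alpha|+d)h_\alpha$, so that Parseval's formula gives
\begin{equation*}
\norm{H^Nf}_{L^2}^2=\sum_{\alpha\in\nn d}|c_\alpha(f)|^2\,(2|\alpha|+d)^{2N}.
\end{equation*}
Everything therefore reduces to comparing the decay of the sequence $(c_\alpha(f))$ with the growth in $N$ of these weighted $\ell^2$ norms. Since each of the two implications below turns a bound with parameter $r$ into a bound whose parameter is a fixed, increasing function of $r$ tending to $0$ as $r\to 0$, the Beurling-type (``for every $r$'') equivalence of Theorem \ref{thm:mainthm2} follows immediately from the Roumieu-type estimates; this is the precise sense in which ``the same arguments'' apply.

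For the implication (1) $\Rightarrow$ (2) I would insert $|c_\alpha(f)|^2\lesssim r^{2|\alpha|}/\alpha!$ into the formula above and collapse the multi-index sum to a scalar one in $n=|\alpha|$ by means of the identity $\sum_{|\alpha|=n}1/\alpha!=d^{\,n}/n!$. This leaves the task of estimating $\sum_{n}\frac{(d r^{2})^{n}}{n!}(2n+d)^{2N}$ in $N$, a quantity governed by its largest term. A Stirling analysis locates the maximising index at $n^\ast\approx 2N/\log N$; evaluating the summand there, and noting that the $O(n^\ast)$ neighbouring terms only cost a factor polynomial in $N$, reproduces precisely the right-hand side of condition (2), with $r$ replaced by a fixed multiple of $r$. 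The ``$+d$'' is harmless and is absorbed into the adjusted constant.

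For the implication (2) $\Rightarrow$ (1) I would use that a single term of a sum of nonnegative numbers is dominated by the whole, so that
\begin{equation*}
|c_\alpha(f)|\,(2|\alpha|+d)^{N}\le \norm{H^Nf}_{L^2}\qquad\text{for all }N\ge 0 .
\end{equation*}
Substituting the bound from (2) and minimising the resulting expression over $N$ is the heart of the matter. The minimiser is the inverse of the previous relation, namely $N\approx \tfrac12 |\alpha|\log|\alpha|$ (so that $2N/\log N\approx |\alpha|$), and a Stirling computation at this value collapses the complicated right-hand side of (2) into $(r')^{|\alpha|}(\alpha!)^{-1/2}$, with $r'$ again a fixed function of $r$. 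This recovers condition (1).

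The main obstacle is the asymptotic bookkeeping in both directions: one must locate the dominant term (resp. the optimal $N$) with enough precision to reproduce the exact exponent $N(1-1/\log N)$ and the exact base $2N/\log N$, while checking that the $\log\log N$ corrections, the factor $2^N$ coming from the eigenvalue $2|\alpha|+d$, and the polynomial-in-$N$ losses from the summation all fit inside the permitted change of parameter. The second, conceptually more important, point is to verify that the map $r\mapsto r'$ implicit in each implication is increasing with range all of $(0,\infty)$, so that quantifying over every $r>0$ on one side is equivalent to quantifying over every $r>0$ on the other; this is exactly what upgrades the Roumieu statement of Theorem \ref{thm:mainthm1} to the Beurling statement of Theorem \ref{thm:mainthm2}.
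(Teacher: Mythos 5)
Your proposal is correct and takes essentially the same route as the paper: your first implication (Parseval, collapse to a scalar sum, largest-term/Stirling analysis at $n^\ast \approx 2N/\log N$) is the paper's Proposition \ref{prop1} together with Lemma \ref{lem:series}, and your second implication (bounding $|c_\alpha(f)|$ by $\norm{H^Nf}_{L^2}/(2|\alpha|+d)^N$ and optimizing over $N$ near $\tfrac12|\alpha|\log|\alpha|$) is the paper's Proposition \ref{HermiteEstToCoeffEst}, where that optimization is packaged rigorously via a Young-conjugate argument (Lemma \ref{lem:convex}). The paper likewise deduces Theorem \ref{thm:mainthm2} from these same two propositions, the point being exactly the one you emphasize: each implication sends the parameter $r$ to a controlled parameter $r'$ that can be made arbitrarily small, so the Beurling-type quantification transfers.
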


\par

We need some preparations for the proofs, and start with the
following lemma.

\par

\begin{lemma}\label{lem:interval}
Let $r > 0$. Then, for $N$ large enough, the function
$$
f(t) = \frac {t^{2N}(2re)^t}{t^t},\ t > 0,
$$
attains its maximum in the interval $[t_1, t_2],$ where
\begin{equation}\label{talphaDef}
t_\alpha = \frac{2N}{\log N}\left(1 + \alpha \frac{\log (r\log N)}{\log N + 1}\right),
\quad \alpha \ge 0.
\end{equation}
\end{lemma}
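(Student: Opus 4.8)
The plan is to locate the maximizer of $f$ by a logarithmic-derivative computation and then to trap it between $t_1$ and $t_2$ by checking the sign of that derivative at the two endpoints. First I would write $\log f(t) = 2N\log t + t\log(2re) - t\log t$ and differentiate to obtain
\[
g(t) := \frac{d}{dt}\log f(t) = \frac{2N}{t} - \log t + \log(2r).
\]
Since $g'(t) = -2N/t^2 - 1/t < 0$ on $(0,\infty)$, the derivative $g$ is strictly decreasing, with $g(t)\to+\infty$ as $t\to 0^+$ and $g(t)\to-\infty$ as $t\to\infty$; moreover $f(t)\to 0$ at both endpoints. Hence $f$ attains its global maximum at the unique zero $t^\ast$ of $g$, and since $g$ decreases it is enough to prove $g(t_1)\ge 0$ and $g(t_2)\le 0$: the first inequality forces $t^\ast\ge t_1$ and the second $t^\ast\le t_2$.

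Next I would substitute $t=t_\alpha$ and simplify. Setting $L=\log N$ and $P=\log(rL)$, so that $A:=\log(r\log N)/(\log N+1)=P/(L+1)$ and $t_\alpha=\frac{2N}{L}(1+\alpha A)$, a routine manipulation of the three terms in $g(t_\alpha)$ collapses to
\[
g(t_\alpha) = P\,\frac{(1-\alpha)L + 1 + \alpha P}{L+1+\alpha P} - \log(1+\alpha A).
\]
For $\alpha=2$ the numerator equals $1+2\log(rL)-\log N$, which is negative once $N$ is large (as $\log N$ outgrows $\log\log N$); together with $-\log(1+2A)<0$ this gives $g(t_2)<0$, the easy half.

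The delicate half is $\alpha=1$, where the dominant $L$-term cancels in the numerator and the sign of $g(t_1)$ is governed only by lower-order $\log\log N$ contributions. Here I would use $\log(1+A)\le A=P/(L+1)$ to bound from below and telescope:
\[
g(t_1) \ge P\left(\frac{1+P}{L+1+P}-\frac{1}{L+1}\right) = \frac{P^2L}{(L+1+P)(L+1)} > 0 .
\]
This settles $g(t_1)\ge 0$ and completes the argument. The main obstacle is precisely this $\alpha=1$ endpoint: the leading $L\log(rL)$ terms annihilate, so one must extract the $O((\log\log N)^2/\log N)$ remainder and verify its positivity; the clean cancellation producing $P^2L/((L+1+P)(L+1))$ after the elementary estimate $\log(1+A)\le A$ is what keeps this transparent and avoids any fragile asymptotic bookkeeping.
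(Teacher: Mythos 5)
Your proof is correct, and its overall skeleton coincides with the paper's: both pass to $m(t)=\log f(t)$, use strict concavity of $m$ (equivalently, strict monotonicity of $g=m'$) to reduce the lemma to the endpoint sign conditions $g(t_1)\ge 0$ and $g(t_2)<0$, and your treatment of $t_2$ is, after unwinding notation, the same computation as the paper's: your term $P\,\frac{1+2P-L}{L+1+2P}$ is exactly the paper's $\bigl(1+2\tfrac{\log (rs)}{s+1}\bigr)^{-1}h(s)$ with $s=L$, $P=\log(rs)$. The genuine divergence is at the delicate endpoint $t_1$. The paper notes that $m'(t_0)=\log(r\log N)>0$ and that $t_1$ is precisely the point where the tangent line to the graph of $m'$ at $\bigl(t_0,m'(t_0)\bigr)$ crosses the abscissa (a Newton step), so convexity of $m'$ gives $m'(t_1)>0$ with essentially no computation; you instead derive the exact closed form $g(t_\alpha)=P\,\frac{(1-\alpha)L+1+\alpha P}{L+1+\alpha P}-\log(1+\alpha A)$ and conclude via $\log(1+A)\le A$, obtaining the explicit lower bound $\frac{P^2L}{(L+1+P)(L+1)}$. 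The two mechanisms rest on different elementary convexity facts (convexity of $m'$ versus concavity of $\log$), and each has its merits: the paper's argument explains where the otherwise unmotivated formula \eqref{talphaDef} comes from, while yours is self-contained, quantitative, and handles both endpoints uniformly through a single identity.
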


\par

\begin{proof}
We may assume that $N>e^{1/r}$.
Let
$$
m(t) = \log f(t) = 2N\log t + t\log (2re) - t\log t.
$$
Then
$$
m'(t) = \frac{2N}{t} + \log(2r) - \log t
%$$
\quad \text{and}\quad
%$$
m''(t) = -\frac{2N}{t^2}- \frac{1}{t} < 0.
$$
It follows that $m$ is strictly concave, and has at most one local maximum,
and if so, it is also a global maximum. Furthermore, $m'(t)$ is strictly decreasing
and has a possible zero only in the possible point where maximum of $m(t)$
is attained. Hence it suffices to show that $m'(t_1) > 0$ and $m'(t_2) < 0$.

\par

We observe that $m'(t_0)>0$ and that the tangent line to the graph of $m'$
at point $(t_0, m'(t_0))$ intersects with the abscissa axis at $(t_1,0)$. Since
$m'$ is a convex function it follows that $m'(t_1) > 0$.

\par

In order to show that $m'(t_2) < 0$, we put
\begin{align*}
h(s) & = s+\left (  1+2\frac {\log (rs)}{s+1}  \right )(\log (rs) -s))
\\[1ex]
&= \log (rs) \left ( 1+\frac {2\log (rs)}{s+1} -\frac {2s}{s+1} \right ).
\end{align*}
Then $h(s)<0$ when $s=\log N$ is large enough, and by straight-forward
computations we get
\begin{align*}
m'(t_2) & = \left ( 1+2\frac {\log (rs)}{s+1} \right )^{-1}h(s)
-\log \left (1+2 \frac {\log (rs)}{s+1}\right )
\\[1ex]
&< \left ( 1+2\frac {\log (rs)}{s+1} \right )^{-1}h(s) <0,
\end{align*}
where the inequalities follow from the fact that $N>e^{1/r}$ is chosen large enough.
Hence $m'(t_2) < 0$.
\end{proof}

\par

%In what follows we always assume that $I$ is an interval, given by $(0,R_0]$.

\par

\begin{lemma}\label{lem:maximum}
Let $r> 0$ and let
$$
f(t) = \frac {t^{2N}(2re)^t}{t^t}.
$$
Then there exists a positive and increasing function $\theta$ on $[0,\infty )$
and an integer $N_0(r)$ such that
\begin{equation}\label{maxf(t)Est}
\begin{gathered}
\max_{t > 0} f(t)\leq \left(\frac{2N}{\log N}\right)
^{2N(1 - \frac{1}{\log N})} (\theta (r)\cdot r)^\frac{2N}{\log N},
\\[1ex]
\text{when}\quad N \geq N_0(r).
\end{gathered}
\end{equation}
%%
%provided $N$ is large enough.
\end{lemma}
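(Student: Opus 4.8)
The plan is to combine the interval‐localization from Lemma~\ref{lem:interval} with an explicit evaluation of $f$ at the right endpoint of that interval. Since Lemma~\ref{lem:interval} guarantees that, for $N$ large, the maximum of $f(t)=t^{2N}(2re)^t/t^t$ is attained at some point $t^*\in[t_1,t_2]$, I would not try to solve $m'(t)=0$ exactly; instead I would bound $\max_{t>0}f(t)$ by controlling $f$ on the whole interval $[t_1,t_2]$. The cleanest route is to write $m(t)=\log f(t)=2N\log t+t\log(2re)-t\log t$ and estimate $m(t^*)$ from above. Because $m$ is strictly concave and $m'(t_2)<0$, the value $m(t^*)$ is at most $m(t_2)$ plus a correction, but in fact it is simplest to bound $m(t^*)\le m(t_0)$ from above is false since $t_0<t^*$; so I would instead evaluate $m$ directly at the leading‐order point $t_0=\tfrac{2N}{\log N}$ and track the error coming from the displacement up to $t_2$.

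The key computation is the substitution $t_0=\tfrac{2N}{\log N}$, for which $\log t_0=\log(2N)-\log\log N$. Plugging this in gives
\begin{align*}
m(t_0)&=2N\bigl(\log(2N)-\log\log N\bigr)+t_0\log(2re)-t_0\bigl(\log(2N)-\log\log N\bigr)
\\[1ex]
&=\Bigl(2N-t_0\Bigr)\log\tfrac{2N}{\log N}+t_0\bigl(1+\log(2r)\bigr).
\end{align*}
Writing $2N-t_0=2N\bigl(1-\tfrac1{\log N}\bigr)$ and $t_0=\tfrac{2N}{\log N}$, the first term produces exactly the claimed factor $\bigl(\tfrac{2N}{\log N}\bigr)^{2N(1-1/\log N)}$ after exponentiation, and the second term contributes $\bigl(\tfrac{2N}{\log N}\bigr)^{0}\cdot(2er)^{2N/\log N}$‐type growth. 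I would then absorb the constants $2e$, together with the error incurred by moving from $t_0$ to the true maximizer inside $[t_1,t_2]$, into a single increasing factor $\theta(r)$ so that the surplus is captured by replacing $r$ with $\theta(r)\cdot r$ in the exponent $\tfrac{2N}{\log N}$.

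The main obstacle is the error analysis for the displacement from $t_0$ to $t^*$. Since $t^*$ can lie as far as $t_2=\tfrac{2N}{\log N}\bigl(1+\alpha\,\tfrac{\log(r\log N)}{\log N+1}\bigr)$ with $\alpha$ bounded, the relative displacement is $O\bigl(\tfrac{\log(r\log N)}{\log N}\bigr)$, which is small but not negligible at the scale $2N$. I would control this by a second‐order Taylor expansion of $m$ about $t_0$: since $m'(t_0)=\tfrac{2N}{t_0}+\log(2r)-\log t_0=\log(\log N)+\log(2r)-\log\log N+O(1)=\log(2r)+O(1)$ and $m''<0$, the increment $m(t^*)-m(t_0)$ is at most $m'(t_0)\,(t^*-t_0)$, which is of order $\tfrac{2N}{\log N}\cdot\log(r\log N)$. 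This is precisely of the form $\tfrac{2N}{\log N}\cdot(\text{increasing function of }r)$, so it too is absorbable into $\theta(r)$, and choosing $N_0(r)$ large enough (consistent with the hypotheses $N>e^{1/r}$ of the preceding lemma) makes all the $O(1)$ remainders uniform. Assembling these bounds and exponentiating then yields \eqref{maxf(t)Est}, with $\theta$ positive and increasing by construction.
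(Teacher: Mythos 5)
Your overall strategy is sound and is in fact essentially the paper's argument in different clothing: the paper also localizes the maximizer in $[t_1,t_2]$ via Lemma \ref{lem:interval}, evaluates $f(t_0)$ exactly to produce the main term $\bigl(\tfrac{2N}{\log N}\bigr)^{2N(1-1/\log N)}(2re)^{2N/\log N}$, and then bounds the excess $f(t_\alpha)/f(t_0)$ (the paper parametrizes $t_\alpha=t_0(1+\alpha g(s))$, $\alpha\in[1,2]$, and uses $1+x\le e^x$, where you instead use the tangent-line bound from concavity). However, your error analysis contains two mistakes, one of them fatal as written. First, the computation of $m'(t_0)$ is wrong: since $2N/t_0=\log N$ and $\log t_0=\log 2+\log N-\log\log N$, one has
$$
m'(t_0)=\log N+\log(2r)-\log t_0=\log r+\log\log N=\log (r\log N),
$$
which tends to infinity with $N$; it is not $\log(2r)+O(1)$ (you substituted $\log\log N$ for $2N/t_0$ and let the divergent terms cancel incorrectly).

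Second, and decisively, the absorption step is a non sequitur. You assert that an increment of order $\tfrac{2N}{\log N}\log(r\log N)$ is "of the form $\tfrac{2N}{\log N}\cdot(\text{increasing function of }r)$" and hence absorbable into $\theta(r)$. It is not: $\log(r\log N)$ is unbounded in $N$, so exponentiating such a term produces a factor $(r\log N)^{2N/\log N}$, which cannot be dominated by $(\theta(r)\cdot r)^{2N/\log N}$ for any function $\theta$ of $r$ alone; if the increment genuinely had that size, the lemma would be false by your own estimates. The argument is rescued only by the correct, smaller bound: by concavity and \eqref{talphaDef},
$$
m(t^*)-m(t_0)\le m'(t_0)\,(t_2-t_0)=\log(r\log N)\cdot 2t_0\,\frac{\log(r\log N)}{\log N+1}
=\frac{2N}{\log N}\cdot\frac{2\bigl(\log(r\log N)\bigr)^2}{\log N+1},
$$
and the factor $\tfrac{2(\log(r\log N))^2}{\log N+1}$ \emph{is} bounded uniformly in $N\ge N_0(r)$ by an increasing function of $r$ (e.g.\ using $\log(r\log N)\le\log(1+r)+\log\log N$ and $(\log\log N)^2/\log N\le 4/e^2$, one gets the bound $4(\log(1+r))^2+3=:C(r)$). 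With this correction, $\max_{t>0}f(t)\le f(t_0)\,e^{C(r)\cdot 2N/\log N}$ together with $f(t_0)=\bigl(\tfrac{2N}{\log N}\bigr)^{2N(1-1/\log N)}(2re)^{2N/\log N}$ yields \eqref{maxf(t)Est} with $\theta(r)=2e\,e^{C(r)}$; but as written, your proof rests on a miscomputed derivative and an absorption principle that is false, so it has a genuine gap.
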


\par

\begin{proof}
Let $N_0(r)>\max (e^{1/r},e)$ be such that the conclusions in Lemma
\ref{lem:interval} are fulfilled when $N\ge N_0(r)$. Then
$$
\max_{t > 0} f(t) = \max_{\alpha \in [1,2]}f(t_\alpha).
$$
Let $N\ge N_0(r)$,
$$
s = \log N, \quad g(s) = \frac{\log (rs)}{s+1}, \quad
\text{and}\quad g_0(s) = \frac{\log ((r+2)s)}{s+1}.
$$
Then $s>1$, $0<g(s)<g_0(s)$, $t_\alpha = t_0\left( 1+\alpha g(s) \right )$ and
\begin{align*}
f(t_\alpha) & = t_\alpha^{2N-t_\alpha}(2re)^{t_\alpha}
\\[1ex]
& = t_0^{2N-t_0(1+\alpha g(s))}(1+\alpha g(s))^{2N-t_0(1+\alpha g(s))}
(2re)^{t_0(1+\alpha g(s))}
\\[1ex]
&=
f(t_0)(e^{\alpha g(s)})^{-t_0\log t_0} %t_0^{-t_0\alpha g(s)}
(1+\alpha g(s))^{2N-t_0(1+\alpha g(s))}
(2re)^{t_0\alpha g(s)}.
\end{align*}
Since $1+x\leq e^x$ we obtain
\begin{align*}
f(t_\alpha) &\leq f(t_0)\left(e^{\alpha g(s)}\right)^{-t_0\log t_0 + 2N}
(1+\alpha g(s))^{- t_0 (1 + \alpha g(s))}(2re)^{t_0\alpha g(s)}
\\[1ex]
&\leq f(t_0)\left(e^{\alpha g(s)}\right)^{-t_0\log t_0 + 2N}
(2re)^{t_0\alpha g(s)}.
\end{align*}

\par

Now
\begin{equation*}
2N-t_0\log t_0
= \frac{2N}{\log N}\left( \log \left ( \frac{\log N}{2} \right ) \right) \leq
\frac{2N}{\log N}\log s,
\end{equation*}
which gives
$$
f(t_\alpha) \leq f(t_0)\left ( e^{\alpha g(s)\log s}\right ) ^{\frac{2N}
{\log N}}(2re)^{\frac{2N}{\log N}\alpha g(s)}\le 
f(t_0) \theta _0(r)^{\frac{2N}{\log N}},
$$

when
$$
\theta _0(r) = \sup _{s>1}e^{\alpha g_0(s)\log s} (2re)^{\alpha g_0(s)}.
$$
Evidently, $\theta _0$ is positive and increasing on $[0,\infty )$,
and since
$$
f(t_0) = \left ( \frac{2N}{\log N}\right ) ^{2N(1 - \frac{1}{\log N})}(2re)^{\frac{2N}{\log N}},
$$
\eqref{maxf(t)Est} holds with $\theta (r)= 2e\theta _0(r)$.
\end{proof}

\par

\begin{lemma}\label{lem:series}
Let $0 < r < r_0$, $a_1>0$, $a_2\ge 0$ and let $\theta (r)$
be as in Lemma
\ref{lem:maximum}.
Then
\begin{multline}\label{Eq:lem:series}
\sum_{k=0}^\infty \frac{(a_1k+a_2)^{2N}r^{2k}}{k!}
\\[1ex]
\leq 
C a_1^{2N}\left(\frac{2N}{\log N}\right)
^{2N(1 - \frac{1}{\log N})} (r_0^2 \cdot \theta (r_0^2))^\frac{2N}{\log N}
\frac {r_0^2}{r_0^2-r^2},
\end{multline}
for some constant $C\ge 1$, depending on $a_1$, $a_2$ and $r_0$ only,
provided $N$ is chosen large enough.
\end{lemma}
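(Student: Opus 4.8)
The plan is to decouple the series into a geometric factor, which produces $\tfrac{r_0^2}{r_0^2-r^2}$, and a supremum of a single sequence, which is then controlled by Lemma \ref{lem:maximum}. First I factor out $a_1^{2N}$ and set $b=a_2/a_1\ge 0$, so that $(a_1k+a_2)^{2N}=a_1^{2N}(k+b)^{2N}$. Writing $r^{2k}=r_0^{2k}(r/r_0)^{2k}$ and using $r<r_0$, I get
\begin{equation*}
\sum_{k=0}^\infty \frac{(a_1k+a_2)^{2N}r^{2k}}{k!}
\le a_1^{2N}\left(\sup_{k\ge 0}\frac{(k+b)^{2N}r_0^{2k}}{k!}\right)
\sum_{k=0}^\infty \left(\frac{r}{r_0}\right)^{2k}
= a_1^{2N}\left(\sup_{k\ge 0}\frac{(k+b)^{2N}r_0^{2k}}{k!}\right)\frac{r_0^2}{r_0^2-r^2}.
\end{equation*}
It therefore suffices to bound $M:=\sup_{k\ge0}\tfrac{(k+b)^{2N}r_0^{2k}}{k!}$ by $\left(\tfrac{2N}{\log N}\right)^{2N(1-1/\log N)}(r_0^2\theta(r_0^2))^{2N/\log N}$ for $N$ large.

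Next I relate $M$ to the function $f$ of Lemma \ref{lem:maximum} taken with parameter $\rho=r_0^2$, i.e. $f(t)=t^{2N}(2r_0^2e)^t/t^t$. Using $k!\ge (k/e)^k$, for $k\ge 1$ I have $\tfrac{(k+b)^{2N}r_0^{2k}}{k!}\le \tfrac{(k+b)^{2N}(r_0^2e)^k}{k^k}=f(k)\,\tfrac{(1+b/k)^{2N}}{2^k}$. The whole point of keeping the factor $2^k$ (the discrepancy between the available $(r_0^2e)^k$ and the base $(2r_0^2e)^k$ of $f$) is that it absorbs the shift factor $(1+b/k)^{2N}\le e^{2Nb/k}$: indeed $(1+b/k)^{2N}\le 2^k$ as soon as $2Nb/k\le k\log 2$, that is, for $k\ge k_*:=\sqrt{2Nb/\log 2}=O(\sqrt N)$. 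Hence for every $k\ge k_*$ one has $\tfrac{(k+b)^{2N}r_0^{2k}}{k!}\le f(k)\le \max_{t>0}f(t)$.

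It remains to treat the finitely many indices $0\le k<k_*$, where the ratio to $f$ is not under control. Here I bound crudely: $(k+b)^{2N}\le (k_*+b)^{2N}$, whose logarithm is $\sim N\log N$, while $r_0^{2k}/k!\le e^{r_0^2}$, so these terms are at most $e^{r_0^2}(k_*+b)^{2N}$. Since, by the computation of $f(t_0)$ in the proof of Lemma \ref{lem:maximum}, $\max_{t>0}f(t)$ has logarithm $\sim 2N\log N$, the small-$k$ terms are dominated by $\max_{t>0}f(t)$ once $N$ is large. Consequently $M\le \max_{t>0}f(t)$ for $N$ large, and an application of Lemma \ref{lem:maximum} with $\rho=r_0^2$ gives the required bound on $M$; combined with the first display this yields \eqref{Eq:lem:series} (even with $C=1$, the stated freedom $C\ge 1$ leaving room for finite-$N$ adjustments).

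The main obstacle is the shift $a_2$: replacing $(k+b)^{2N}$ by $k^{2N}$ naively costs a factor $(1+b/k)^{2N}$ which, near the maximizing index $k\sim 2N/\log N$, equals $N^{b}$ up to bounded factors and is therefore not absorbable into an $N$-independent constant. The resolution, and the step requiring genuine care, is the observation that matching the series to $f$ with parameter $\rho=r_0^2$ (rather than the ``natural'' $\rho=r_0^2/2$) leaves precisely a factor $2^k$ of slack, which dominates $(1+b/k)^{2N}$ throughout the range $k\gtrsim\sqrt N$ containing the maximum; the complementary range is negligible by the crude size comparison valid for large $N$.
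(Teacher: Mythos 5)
Your proof is correct, and it shares the paper's overall skeleton---peel off the geometric series $\sum_k (r/r_0)^{2k}=r_0^2/(r_0^2-r^2)$, bound what remains through $f(t)=t^{2N}(2r_0^2e)^t/t^t$ via $k!\ge (k/e)^k$, and invoke Lemma \ref{lem:maximum} with parameter $r_0^2$---but you handle the real crux, the additive shift, by a genuinely different mechanism. The paper first reduces to $a_1=1$ and \emph{integer} $a_2=a$ (scaling plus monotonicity of the series in $a_2$), and then shifts the factorial together with the power: writing $r^{2k}=r^{-2a}r^{2(k+a)}$ and using $(k+a)!/k!\le (k+a)^a\le C_1 2^{k+a}$, it reindexes the sum into $\sum_{k\ge a}k^{2N}(2r^2)^k/k!$, so that the factor-$2$ slack absorbs a quantity that is polynomial in $k$ and independent of $N$, after which every term is bounded by $\max f$ uniformly in $k$. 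You instead keep the real shift $b=a_2/a_1$ and compare $(k+b)^{2N}$ with $k^{2N}$ directly; the resulting factor $(1+b/k)^{2N}\le e^{2Nb/k}$ is $N$-dependent, so the same $2^k$ slack absorbs it only for $k\ge k_*\sim\sqrt N$, and you dispose of the finitely many indices $k<k_*$ by the size comparison $e^{N\log N+O(N)}\ll f(t_0)=e^{2N\log N(1+o(1))}$, which you justify correctly from the explicit value of $f(t_0)$ in the proof of Lemma \ref{lem:maximum}. Thus both arguments exploit exactly the same slack but distribute the work differently: the paper's absorption is uniform in $k$ at the price of the integrality reduction, while yours needs no reduction but requires the range splitting and an asymptotic comparison valid only for large $N$ (which the statement of the lemma permits). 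Two small dividends of your route: it yields \eqref{Eq:lem:series} with $C=1$ once $N$ is large, and it avoids the bookkeeping point in the paper's chain where the factor $r^{-2a}$ must be cancelled against the $(r/r_0)^{2a}$ arising from the geometric series started at $k=a$ (a step the paper's display glosses over).
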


\par

\begin{proof}
First we prove the result in the case $a_1=1$ and
$a_2=a\in {\bf N}$. By Lemma \ref{lem:maximum} and using
Stirling's formula we have 
\begin{multline*}
\sum_{k=0}^\infty\frac{(k+a)^{2N}r^{2k}}{k!} = r^{-2a}
\sum_{k=0}^\infty\frac{(k+a)^{2N}r^{2(k+a)}}{k!}
\\[1ex]
\leq C_1 r^{-2a}\sum_{k=0}^\infty\frac{(k+a)^{2N}(2r^2)^{k+a}}{(k+a)!} 
\\[1ex]
= C_1 r^{-2a}\sum_{k=a}^\infty\frac{k^{2N}(2r^2)^k}{k!}
\leq C_2 r^{-2a}\sum_{k=a}^\infty\frac{k^{2N}(2r_0^2e)^k}{k^k}
\left ( \frac{r}{r_0}\right )^{2k}
\\[1ex]
\leq C_2 r^{-2a}\left( \frac{2N}{\log N} \right)^{2N (1 - \frac{1}{\log N} )}
(r_0^2\cdot \theta (r_0^2))^{\frac{2N}{\log N}} \sum_{k=0}^\infty
\left ( \frac{r}{r_0} \right )^{2k}
\\[1ex]
= C_2\left(\frac{2N}{\log N}\right)^{2N(1 - \frac{1}{\log N})} 
(r_0^2 \cdot \theta (r_0^2))^{\frac{2N}{\log N}}
\frac{r_0^2}{r_0^2-r^2},
\end{multline*}
for some constants $C_1$ and $C_2$ which are independent of
$N$ and $r_0$.

\par

For general $a_1$ and $a_2$, let
$$
s_{a_1,a_2}(N,r) := \sum _{k=0}^\infty \frac {(a_1k+a_2)^{2N}r^{2k}}{k!}
\quad \text{and}\quad s_a(N,r):= s_{1,a}(N,r).
$$
Then $s_{a_1,a_2}(N,r)=a_1^{2N}s_{a_2/a_1}(N,r)$, and hence
it suffices to prove \eqref{Eq:lem:series} in the case $a_1=1$.
Moreover, since $s_{a_1,a_2}(N,r)$ increases with $a_1$ and $a_2$,
and all factors on the right-hand side of \eqref{Eq:lem:series} except
$C$ are independent of $a_1$ and $a_2$, it follows that
we may assume that $a=a_2$ is an integer, and the proof is complete.
\end{proof}

\begin{rem}

For a given $N\in {\mathbb N},$ the Bell number $B_N$ counts the number of all partitions of a set of size $N$ and it is given by $$B_N=\frac{1}{e}\sum _{k=1}^\infty \frac{k^{2N}}{k!}.$$ 
According to \cite[Theorem 2.1]{berend-tassa},
$$
\frac{1}{e}\sum _{k=1}^\infty \frac{k^{2N}}{k!} <
\left ( \frac{0.792\times 2N}{\log(2N+1)} \right ) ^{2N},\ N = 1,2,\ldots
$$
For large values of $N$ we claim that the estimate of Lemma \ref{lem:series}
is significantly better. 

For arbitrary
$$
r > 0,\quad \frac{1}{e} < \lambda < 1
\quad \text{and}\quad
0 < a < 1-\log \left (\frac{1}{\lambda} \right )=1+\log \lambda 
$$
we have
$$
\left ( \frac {2N}{\log N} \right )^{2N} \left ( \frac {r\log N}{2N} \right )
^{\frac {2N}{\log N}}
= \left ( \frac {2\lambda N}{\log(2N+1)} \right )^{2N}\cdot C
_N^{\frac {2N}{\log N}},
$$
where
$$
\lim_{N\to \infty}C_N\cdot N^a = 0.
$$

\par

In fact,
$$
C_N = \frac{r}{2} \left ( \frac{\log(2N+1)}{\log N} \right ) ^{\log N}
\frac{\log N}{e^{\log N (1 + \log({\lambda}))}}.
$$
Since
$$
\lim_{N\to \infty}\left(\frac{\log(2N+1)}{\log N}\right)^{\log N} = 2
$$
and
$$
\lim_{N\to \infty}\frac{N^a \log N}{e^{\log N (1 + \log({\lambda}))}} =
\lim_{N\to \infty}\frac{\log N}{e^{\log N (1 + \log({\lambda})-a)}} = 0
$$
we are done.

\par

The lower estimate
$$
\left(\frac{2N}{e \log(2N)} \right)^{2N} \leq
\frac{1}{e}\sum _{k=1}^\infty \frac{k^{2N}}{k!}
$$
appears in  \cite[(2.4)]{berend-tassa}. Since the function $f(t)=t^{2N-t}$ is increasing in
$[1,1+\frac{2N}{\log N}]$, we have for large values of $N$
$$
f(\frac{2N}{\log N})\leq \frac{1}{e}\sum_{k=1}^\infty \frac{k^{2N}}{k!}.
$$
Again it is straight-forward to check that
$$
\left(\frac{2N}{e \log(2N)} \right)^{2N}=o(f(\frac{2N}{\log N}))\quad \text{as}\quad N\to \infty.
$$
\end{rem}

\par

\par\medskip

In the following we apply the previous result to functions $f$ with Hermite
series expansions, given by \eqref{fHermite}.

\par

\begin{prop}\label{prop1}
Let $f\in L^2(\rr d)$ be given by \eqref{fHermite} such that
$$
\left | c_\alpha (f)\right| \lesssim r^{|\alpha |} (\alpha !)^{-\frac{1}{2}}
$$
for some $r > 0.$ Then, for $0<d\cdot r<r_0,$
\begin{gather*}
\norm{H^N f}^2_{L^2} \lesssim 2^{2N}\left(\frac{2N}{\log N}\right)
^{2N(1 - \frac{1}{\log N})} (r_0^2 \cdot \theta (r_0^2))^\frac{2N}{\log N}
\frac {r_0^2}{r_0^2-(d\cdot r)^2}.
\end{gather*}
\end{prop}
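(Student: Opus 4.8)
The plan is to evaluate $\norm{H^Nf}_{L^2}^2$ exactly as a scalar series in the single index $k=|\alpha|$ and then quote Lemma \ref{lem:series}. The key structural fact is that the Hermite functions form an orthonormal basis of $L^2(\rr d)$ consisting of eigenfunctions of the harmonic oscillator, with $Hh_\alpha=(2|\alpha|+d)h_\alpha$ and hence $H^Nh_\alpha=(2|\alpha|+d)^Nh_\alpha$. Since \eqref{fHermite} expands $f$ in this basis, Parseval's identity gives
\[
\norm{H^Nf}_{L^2}^2=\sum_{\alpha\in\nn d}|c_\alpha(f)|^2(2|\alpha|+d)^{2N},
\]
and inserting the hypothesis $|c_\alpha(f)|^2\lesssim r^{2|\alpha|}/\alpha!$ reduces the problem to estimating $\sum_\alpha(2|\alpha|+d)^{2N}r^{2|\alpha|}/\alpha!$.

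Next I would collapse the multi-index sum to a one-dimensional one by grouping the terms with $|\alpha|=k$. The multinomial identity $\sum_{|\alpha|=k}k!/\alpha!=d^k$ yields $\sum_{|\alpha|=k}1/\alpha!=d^k/k!$, so that
\[
\sum_{\alpha\in\nn d}\frac{(2|\alpha|+d)^{2N}r^{2|\alpha|}}{\alpha!}=\sum_{k=0}^\infty\frac{(2k+d)^{2N}(dr^2)^k}{k!}\leq\sum_{k=0}^\infty\frac{(2k+d)^{2N}(dr)^{2k}}{k!},
\]
where the last inequality uses $d\geq 1$, so that $d^k\leq d^{2k}$. This is exactly the series treated in Lemma \ref{lem:series}, with $a_1=2$, $a_2=d\geq 0$, and with the role of the radius $r$ there played by $dr$.

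Finally, applying Lemma \ref{lem:series} under the hypothesis $0<dr<r_0$ produces the prefactor $a_1^{2N}=2^{2N}$ together with the factors $(2N/\log N)^{2N(1-1/\log N)}$, $(r_0^2\theta(r_0^2))^{2N/\log N}$ and $r_0^2/(r_0^2-(dr)^2)$, which is precisely the claimed bound. I do not expect a serious obstacle here, because the delicate analysis---locating the maximum of $t\mapsto t^{2N}(2re)^t/t^t$ and summing the resulting series---has already been done in Lemmas \ref{lem:interval}--\ref{lem:series}. The only points that require care are the bookkeeping of the dimensional factor $d^k$, which I absorb into the effective radius $dr$ (this is where the condition $0<d\cdot r<r_0$ and the term $(d\cdot r)^2$ in the denominator originate), and checking that the implicit constants---the square of the constant from the hypothesis on $c_\alpha(f)$ and the constant $C$ from Lemma \ref{lem:series}---combine into a single $N$-independent constant absorbed by $\lesssim$.
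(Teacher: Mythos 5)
Your proof is correct and follows essentially the same route as the paper: expand in the Hermite eigenbasis, use $Hh_\alpha=(2|\alpha|+d)h_\alpha$ and Parseval to reduce to the scalar series $\sum_k (2k+d)^{2N}(d\cdot r)^{2k}/k!$, and apply Lemma \ref{lem:series} with $a_1=2$, $a_2=d$. The only (harmless) difference is bookkeeping: you collapse the multi-index sum via the exact identity $\sum_{|\alpha|=k}1/\alpha!=d^k/k!$ and then use $d^k\le d^{2k}$, whereas the paper bounds $1/\alpha!\le d^{|\alpha|}/|\alpha|!$ termwise and absorbs the multiplicity of $\{|\alpha|=k\}$ afterwards; your version is, if anything, slightly cleaner.
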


\par

\begin{proof}
From
$$
|c_\alpha (H^N f)| = (2|\alpha |+d)^N|c_\alpha (f)|
%\asymp 2^N |\alpha |^N |c_\alpha (f)|
$$
we obtain
\begin{align*}
\norm{H^N f} _{L^2}^2 
&= \sum _{\alpha \in \nn d}
|c_\alpha (H^N f)|^2 \lesssim \sum_{\alpha \in \nn d} \frac{(2|\alpha |+d)
^{2N}r^{2|\alpha |}}{\alpha !}
\\[1ex]
&\lesssim \sum _{\alpha \in \nn d} \frac{(2|\alpha |+d)^{2N}
d^{|\alpha |}r^{2|\alpha |}}{|\alpha | !}
\asymp \sum _{k=0}^\infty \frac{(2k+d)^{2N}(d\cdot r)^{2k}}{k!},
\end{align*}
and it suffices to apply Lemma \ref{lem:series}.
\end{proof}

\par

Next we deduce some kind of converse of Proposition
\ref{prop1}. For this reason we need the following lemma.

\par

\begin{lemma}\label{lem:convex}
Let $r > 1$,
$$
\psi (t) = e^t \left( \frac {t}{2}-\log r \right ) + r^2\log r
\quad \text{and}\quad
t_0 = 2\log r.
$$
Then there exists a convex and increasing
function $\varphi:[0,\infty)\to [0,\infty)$ such that $\varphi(0) = 0$
and $\varphi(t) = \psi(t)$ for every $t\geq t_0$.
\end{lemma}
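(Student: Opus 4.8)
The goal is to produce a convex increasing $\varphi$ on $[0,\infty)$ that agrees with $\psi$ past the threshold $t_0 = 2\log r$ while being pinned down to $\varphi(0)=0$. My first step is to understand the behaviour of $\psi$ on $[t_0,\infty)$: I would compute $\psi'(t) = e^t\bigl(\tfrac t2 - \log r\bigr) + \tfrac 12 e^t = e^t\bigl(\tfrac t2 - \log r + \tfrac 12\bigr)$ and $\psi''(t) = e^t\bigl(\tfrac t2 - \log r + 1\bigr)$. For $t\ge t_0 = 2\log r$ one has $\tfrac t2 - \log r \ge 0$, so both $\psi'(t)>0$ and $\psi''(t)>0$ there; hence $\psi$ is already strictly convex and strictly increasing on $[t_0,\infty)$. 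The point of choosing this particular $t_0$ is precisely that it is where the sign of $\tfrac t2 - \log r$ turns nonnegative, making the right tail well behaved. I would also record the boundary data $\psi(t_0) = e^{t_0}\cdot 0 + r^2\log r = r^2\log r$ (since $e^{t_0}=e^{2\log r}=r^2$) and $\psi'(t_0) = r^2\cdot \tfrac 12 = \tfrac 12 r^2 > 0$.

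The plan is then to \emph{glue} a simple convex piece on $[0,t_0]$ onto the graph of $\psi$ at $t=t_0$, matching value and first derivative so that convexity is preserved across the junction. Since I need $\varphi(0)=0$, $\varphi(t_0)=\psi(t_0)=r^2\log r>0$ (here the hypothesis $r>1$ guarantees positivity), and $\varphi'(t_0)=\psi'(t_0)=\tfrac 12 r^2$, the natural candidate on $[0,t_0]$ is a convex curve with these three constraints. A parabola $\varphi(t)=at^2+bt$ through the origin works: the two conditions $at_0^2+bt_0 = r^2\log r$ and $2at_0+b=\tfrac12 r^2$ determine $a,b$ uniquely, and I would check that the resulting $a$ is nonnegative (so the piece is convex) and that $\varphi$ is increasing on $[0,t_0]$, i.e.\ $\varphi'(t)=2at+b\ge 0$ there. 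For this I verify $\varphi'(0)=b\ge 0$; combined with $\varphi'(t_0)=\tfrac12 r^2>0$ and linearity of $\varphi'$ on the piece, monotonicity follows once $b\ge 0$ is confirmed.

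The main obstacle is the junction at $t_0$: convexity of the glued function requires not only matching of $\varphi$ and $\varphi'$ but that the left derivative does not exceed the right derivative (no downward kink), which is automatic once I match $\varphi'$ exactly, and that each piece is individually convex. The only genuine thing to verify is the sign of the parabola's leading coefficient $a$; if the value–slope data at $t_0$ happen to force $a<0$, a single parabola would fail and I would instead use a convex \emph{spline} (for instance a short linear segment near the origin followed by a quadratic, or simply take $\varphi$ linear on $[0,t_0]$ with slope $\psi(t_0)/t_0$ if that slope does not exceed $\psi'(t_0)$, ensuring the kink bends upward). I expect the inequality $\psi(t_0)/t_0 \le \psi'(t_0)$, i.e.\ $\tfrac{r^2\log r}{2\log r}=\tfrac12 r^2 \le \tfrac12 r^2$, to hold with equality, which makes the straight segment of slope $\tfrac12 r^2$ from the origin meet $\psi$ tangentially at $t_0$; this gives the cleanest construction, and I would present $\varphi(t)=\tfrac12 r^2\,t$ on $[0,t_0]$ and $\varphi(t)=\psi(t)$ on $[t_0,\infty)$, then confirm $C^1$-matching, convexity, monotonicity, and $\varphi(0)=0$ directly from the formulas above.
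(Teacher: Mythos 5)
Your proposal is correct and, in its final form, is exactly the paper's construction: both hinge on the identity $\psi(t_0)=t_0\,\psi'(t_0)$ (equivalently, the tangent line to $\psi$ at $t_0$ passes through the origin), so that $\varphi$ defined as the linear segment of slope $\tfrac12 r^2$ on $[0,t_0]$ glued to $\psi$ on $[t_0,\infty)$ is $C^1$, convex and increasing with $\varphi(0)=0$. The detour through a quadratic spline is unnecessary but harmless, since you correctly identify that the tangency makes the straight segment the clean choice.
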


\par

\begin{proof}
We have
$$
\psi ^\prime(t) = e^t \left ( \frac{1+t}{2} - \log r\right )
\quad \mbox{and}\quad
\psi ''(t) = e^t \left (1 + \frac{t}{2} - \log r \right ).
$$
Hence
$$
\psi ''(t) > \psi ^\prime(t) > 0
$$
for every $t\geq t_0$. On the other hand, the tangent line
to the graph of $\psi$ at point $(t_0, \psi(t_0))$ passes through
$(0,0),$ since $\psi(t_0) = t_0 \psi^\prime(t_0)$. Consequently,
if $\varphi : [0,\infty)\to [0,\infty)$ is defined by $\varphi(0) = 0$,
$\varphi$ linear in $[0, t_0]$ and $\varphi (t) = \psi(t)$ for $t\geq t_0$,
then $\varphi$ satisfies the required conditions.
\end{proof}

\par

\begin{prop}\label{HermiteEstToCoeffEst}
Let
$$
\norm{H^N f}_{L^2} \lesssim 2^N
\left (\frac{2N}{\log N} \right )^{N(1-\frac{1}{\log N} ) }r^{\frac{2N}{\log N}}.
$$
Then
$$
\left |c_\alpha (f)\right | \lesssim r^{|\alpha |} |\alpha |^{-\frac{|\alpha |}{2} + 1}.
$$
\end{prop}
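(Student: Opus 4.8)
The plan is to exploit the orthonormality of the Hermite functions together with the freedom to tune the exponent $N$ after the fact. Since $Hh_\alpha=(2|\alpha|+d)h_\alpha$ and the $h_\alpha$ form an orthonormal basis of $L^2(\rr d)$, Parseval's identity gives $\norm{H^Nf}_{L^2}^2=\sum_\alpha(2|\alpha|+d)^{2N}|c_\alpha(f)|^2$, so that for each fixed $\alpha$ and each $N$ one has $(2|\alpha|+d)^N|c_\alpha(f)|\le\norm{H^Nf}_{L^2}$. Inserting the hypothesis yields, for all large $N$,
\[
|c_\alpha(f)|\lesssim (2|\alpha|+d)^{-N}2^N\Big(\tfrac{2N}{\log N}\Big)^{N(1-\frac1{\log N})}r^{\frac{2N}{\log N}}.
\]
The crucial feature is that this holds for \emph{every} $N$, so I am free to choose $N=N(\alpha)$ depending on the index.

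Second, I would recast the right-hand side so that the target shape becomes visible. Writing $m=|\alpha|$ and $t_0=\frac{2N}{\log N}$, the identity $\big(\frac{2N}{\log N}\big)^{N(1-\frac1{\log N})}=t_0^{\,N-t_0/2}$ (valid because $N/\log N=t_0/2$) converts the estimate into
\[
|c_\alpha(f)|\lesssim\Big(\tfrac{2t_0}{2m+d}\Big)^{N}t_0^{-t_0/2}r^{t_0}.
\]
This factorizes the bound into a harmless ``matching factor'' $\big(\frac{2t_0}{2m+d}\big)^N$, which is $\le1$ as soon as $t_0\le\frac{2m+d}2$, and a factor $t_0^{-t_0/2}r^{t_0}$ that already carries the shape of the claim.

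Third comes the choice of the integer $N$. Because the sequence $\frac{2N}{\log N}$ is increasing with gaps of size $O(1/\log N)$, for $m$ large there is an integer $N$ with $t_0=\frac{2N}{\log N}\in[m,\,m+\tfrac d2]$. For such $N$ the matching factor is $\le1$; moreover $t_0\mapsto t_0^{-t_0/2}$ is decreasing, so $t_0\ge m$ gives $t_0^{-t_0/2}\le m^{-m/2}$, while $r^{t_0}\le C\,r^{m}$ (directly when $r\le1$, and up to the factor $r^{d/2}$ when $r>1$). Hence $|c_\alpha(f)|\lesssim r^{m}m^{-m/2}\le r^{|\alpha|}|\alpha|^{-|\alpha|/2+1}$, with the exponent $+1$ left entirely as slack. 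I expect Lemma \ref{lem:convex} to enter precisely here, through the identity $t_0^{-t_0/2}r^{t_0}=r^{r^2}e^{-\psi(\log t_0)}$: the convex increasing extension $\varphi$ of $\psi$, with $\varphi(0)=0$, encodes exactly the monotonicity used above and, being defined and increasing on all of $[0,\infty)$, delivers the bound uniformly, in particular for the small values of $t_0$ (equivalently small $m$) at which $\psi$ itself need not be monotone.

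The main obstacle is this discretization step: passing from the continuous optimum $\frac{2N}{\log N}\approx m$ to an admissible integer $N$ while simultaneously keeping the matching factor $\le1$ and $t_0$ within $O(1)$ of $m$ from above. Once the factorized inequality of the second step is in place this is routine, since the gaps $O(1/\log N)$ are far smaller than the length $d/2$ of the admissible window. The finitely many small indices $\alpha$, for which no suitable $N$ in the admissible range exists, are handled trivially and absorbed into the implicit constant, and one checks that the chosen $N$ is large enough for the hypothesis to apply; the convexity provided by Lemma \ref{lem:convex} is what keeps the final estimate clean and uniform across all $\alpha$.
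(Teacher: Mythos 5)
Your proposal is correct, and it reaches the conclusion by a genuinely different route than the paper. The paper's proof is a convex-duality argument: it builds the convex increasing extension $\varphi$ of $\psi$ in Lemma \ref{lem:convex}, passes to the Young conjugate $\varphi^\ast$, proves the discretization estimate \eqref{phi*Est} relating $\sup_N$ over integers to the continuous Legendre transform, and then checks that the quantity $\tfrac{2N}{\log N}\log r + N\log (\tfrac{2N}{\log N}) + \log\varepsilon_N$ appearing in the hypothesis is dominated by $\varphi^\ast(N)+C$; the conclusion then falls out of $(\varphi^\ast)^\ast=\varphi$. You instead optimize over $N$ by hand: after the clean factorization $|c_\alpha(f)|\lesssim \bigl(\tfrac{2t_0}{2m+d}\bigr)^N t_0^{-t_0/2}r^{t_0}$ with $t_0=\tfrac{2N}{\log N}$, you pick for each large $m=|\alpha|$ an integer $N$ with $t_0\in[m,m+\tfrac d2]$ (possible since $N\mapsto \tfrac{2N}{\log N}$ increases to infinity with gaps $O(1/\log N)$), and conclude from the elementary monotonicity of $t\mapsto t^{-t/2}$ and $t \mapsto r^t$. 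Two remarks. First, your use of the full eigenvalue $(2|\alpha|+d)^N$ from Parseval, rather than the weaker $(2|\alpha|)^N$ used in the paper, is precisely what makes the window $[m,m+\tfrac d2]$ close the matching factor, and in fact your argument yields the slightly sharper bound $|c_\alpha(f)|\lesssim r^{|\alpha|}|\alpha|^{-|\alpha|/2}$, with the paper's exponent $+1$ pure slack, as you say. Second, your closing speculation that Lemma \ref{lem:convex} must enter to make the estimate uniform is not borne out by your own argument: your proof is self-contained and never needs the lemma (the finitely many small $|\alpha|$ are absorbed into the constant, exactly as in the paper, whose estimate \eqref{phi*Est} is likewise only asserted for large $k$). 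What the two approaches buy is the usual trade-off: yours is shorter and entirely elementary, while the paper's Young-conjugate formulation is the systematic device for translating between weight-sequence bounds and growth of $\|H^Nf\|_{L^2}$, and therefore transfers more readily to other scales of spaces of this kind.
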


\par

Before the proof we recall that the Young conjugate of a convex and increasing function
$\varphi : [0,\infty )\to [0,\infty )$ is the increasing and convex function
$\varphi^\ast : [0,\infty )\to [0,\infty )$, given by
$$
%\varphi ^\ast : [0,\infty)\to [0,\infty),\quad
\varphi ^\ast (s) =
\sup _{t\geq 0}\left ( st - \varphi (t) \right ) .
$$
It turns out that $( \varphi ^\ast )^\ast = \varphi$.

\par

Now assume that $\varphi$ is the same as in Lemma \ref{lem:convex}.
We claim that
\begin{equation}\label{phi*Est}
k^{\frac{k}{2}-1}r^{-k} \lesssim \exp \left ( \sup_N \left (N \log k -
\varphi ^\ast (N) \right ) \right ) ,
\end{equation}
when $k$ is large enough.

\par

In fact, for $k$ large enough we have
$$
\exp \left( \varphi (\log k)\right ) = k^{\frac{k}{2}}r^{-k}r^{r^2}
\asymp k^{\frac{k}{2}}r^{-k}.
$$
Moreover, for $s\in [N, N+1]$ we have
$$
st - \varphi ^\ast(s) \leq t + \left (Nt - \varphi^\ast(N)\right ),
$$
from where it follows
$$
\varphi (t) = \sup _{s\geq 0} \left ( st - \varphi^\ast(s) \right )
\leq t + \sup _N \left ( Nt - \varphi^\ast(N) \right ).
$$
In particular,
$$
k^{\frac{k}{2}}r^{-k} \lesssim \exp \left (\log k + \sup_N \left (N\log k -
\varphi^\ast(N)\right ) \right ) ,
$$
which is the same as \eqref{phi*Est}.

\par

\begin{proof}[Proof of Proposition \ref{HermiteEstToCoeffEst}]
Let
\begin{equation}\label{EqEpN}
\varepsilon _N := \left ( \frac {\log N}{2N} \right )^{\frac{N}{\log N}},
\end{equation}
and let $\fy$ be the same as in Lemma \ref{lem:convex}.
Since
$$
\left | c_\alpha (f)\right | \leq \inf_N
\left (\frac{1}{(2|\alpha |)^N}\norm {H^N{\color{red}f}}_{L^2} \right )
$$
we have
\begin{equation*}
%\left | c_k(f)\right | ^{-1}
%\\[1ex]
\frac 1{|c_\alpha (f)|} \gtrsim \exp \left (\sup _N \left ( N\log |\alpha | - \frac{2N}{\log N}\log r - N
\log \left ( \frac{2N}{\log N} \right ) - \log \varepsilon _N \right ) \right ) .
\end{equation*}
%%
%
%  I suggest no cancel here./JT
%
%
%
By Lemma \ref{lem:convex} and \eqref{phi*Est} it suffices to prove
$$
\frac{2N}{\log N}\log r + N\log \left (\frac{2N}{\log N} \right ) + \log \varepsilon_N
\le \varphi ^*(N) + C,
$$
for large values of $N$, where $C>0$ is a constant which is independent of
$N$ and $k$. From the definitions it follows that, for $N$ large enough,
\begin{align*}
\varphi ^\ast(N) & \geq N\log \left ( \frac{2N}{\log N} \right ) - \varphi
\left ( \log \left ( \frac{2N}{\log N} \right ) \right )
\\[1ex]
&= N \log \left ( \frac{2N}{\log N} \right ) + \log ( \varepsilon _N)
+ \frac{2N}{\log N}\log r - r^2\log r
\end{align*}
and the lemma is proved.
\end{proof}

\par

\begin{proof}[Proofs of Theorems \ref{thm:mainthm1} and \ref{thm:mainthm2}]
The results follow immediately from Propositions \ref{prop1} and
\ref{HermiteEstToCoeffEst}.
\end{proof}

%%%%%%%%%%%%%%%%%%%%%%%%%%%%%%%%%%%%%
\section{Mapping properties of $\bsySig _s(\rr d)$ and
$\bsycalS _s(\rr d)$ under the Bargmann transform}\label{sec3}
%%%%%%%%%%%%%%%%%%%%%%%%%%%%%%%%%%%%%

\par

In \cite{Toft14}, complete mapping properties of $\bsySig _s(\rr d)$
and $\bsycalS _{\! s}(\rr d)$ under the Bargmann transform are
deduced when $s\ge 1/2$ and when $s=0$. Here we show
analogous properties in the case $0<s<1/2$.

\par

In what follows we let
\begin{align}
\maclA _s(\cc d) &:=\sets {F\in A(\cc d)}{|F(z)|\lesssim e^{R(\log
\eabs z)^{\frac 1{1-2s}}}
\ \text{for some}\ R>0}\label{IndLimAnalSpace}
\intertext{endowed with the inductive limit topology, and}
\maclA _{0,s}(\cc d) &:=
\sets {F\in A(\cc d)}{|F(z)|\lesssim e^{R(\log \eabs z)^{\frac 1{1-2s}}}
\ \text{for every}\ R>0}\label{ProjLimAnalSpace}
\end{align}
endowed with the projective limit topology, when $0<s<\frac 12$. Here,
$\langle z \rangle :=(1+|z|^2)^{1/2}$, as usual.

\par

We may replace the inequalities in \eqref{IndLimAnalSpace}
and \eqref{ProjLimAnalSpace} by suitable (weighted) $L^p$ estimates
on the involved entire functions. This is for example a consequence of
\cite[Theorem 3.2]{Toft12} and the fact that
$$
\sets {e^{\frac {|z|^2}2 -R(\log \eabs z)^{\frac 1{1-2s}}}}{R>0}
$$
is an admissible family of weight functions on $\cc d$ in the sense of
\cite[Definition 1.4]{Toft12}.

\par

Therefore, for any $p\in [1,\infty]$, 
$\maclA _s(\cc d)$ is the set of all $F\in A(\cc d)$ such that
\begin{equation}\label{AnalLpEst}
\left (\int _{\cc d}  |F(z)e^{-R(\log \eabs z)^{\frac 1{1-2s}}}|^p\, d\lambda (z)
\right )^{1/p}<\infty
\end{equation}
is true for some $R>0$, and $\maclA _{0,s}(\cc d)$ is the set of all
$F\in A(\cc d)$ such that \eqref{AnalLpEst} is true for every $R>0$
(also in topological sense, and with obvious modifications when
$p=\infty$). We also note that $\maclA _{0,s}(\cc d)$ is a Fr{\'e}chet
space.

\par

\begin{thm}
Let $0<s<\frac 12$. Then the following is true:
\begin{enumerate}
\item The Bargmann transform $\mathfrak V_d$ is a topological isomorphism from
$\bsycalS _{\! s}(\rr d)$ onto $\maclA _{0,s}(\cc d)$;
%%%
%\begin{equation}\label{IndLimAnalSpace}
%\sets {F\in A(\cc d)}{|F(z)|\lesssim e^{R(\log \eabs z)^{\frac 1{1-2s}}} \ \text{for some}\ R>0};
%\end{equation}
%%%
%when this space is endowed with the inductive limit topology.

\vrum

\item The Bargmann transform $\mathfrak V_d$ is  a topological isomorphism from
$\bsySig _{s}(\rr d)$ onto $\maclA _s(\cc d)$.
%%% 
%\begin{equation}\label{ProjLimAnalSpace}
%\sets {F\in A(\cc d)}{|F(z)|\lesssim e^{R(\log \eabs z)^{\frac 1{1-2s}}} \ \text{for every}\ R>0}.
%\end{equation}
%%% 
%when this space is endowed with the projective limit topology, under which it is a Fr\'echet space.
\end{enumerate}
\end{thm}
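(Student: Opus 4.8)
The plan is to reduce everything to the coefficient level, where the Bargmann transform acts transparently, and then to pass between coefficient decay and radial growth by the Young (Legendre) transform mechanism already exploited in Section \ref{sec2}. Recall that $\mathfrak V_d$ carries the Hermite function $h_\alpha$ to the normalized monomial $e_\alpha(z)=z^\alpha/\sqrt{\alpha!}$, and that these monomials form an orthonormal basis of the Fock space; hence for $f$ as in \eqref{fHermite} the image is $\mathfrak V_d f(z)=\sum_{\alpha}c_\alpha(f)\,e_\alpha(z)$ and $\mathfrak V_d$ is a unitary map of $L^2(\rr d)$ onto the Fock space. Injectivity is therefore automatic, and the content of the theorem is the image identification together with the continuity of $\mathfrak V_d$ and $\mathfrak V_d^{-1}$: in part (1) between $\bsycalS_{\!s}(\rr d)$ and $\maclA_{0,s}(\cc d)$, and in part (2) between $\bsySig_s(\rr d)$ and $\maclA_s(\cc d)$.

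First I would rewrite both sides by their coefficient descriptions. By the characterization recalled in Section \ref{sec1}, membership in $\bsycalS_{\!s}(\rr d)$ (respectively $\bsySig_s(\rr d)$) amounts to the estimate $|c_\alpha(f)|\lesssim e^{-r|\alpha|^{1/2s}}$ holding for some $r>0$ (respectively for every $r>0$). On the analytic side, by \eqref{AnalLpEst} it is enough to control the radial maximum $M_F(\rho)=\max_{|z|=\rho}|F(z)|$, and membership in $\maclA_{0,s}(\cc d)$ (respectively $\maclA_s(\cc d)$) amounts to $M_F(\rho)\lesssim e^{R(\log\eabs z)^{1/(1-2s)}}$ with $|z|=\rho$, for every $R>0$ (respectively some $R>0$). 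Thus the theorem reduces to a single two-sided estimate between the decay of $(c_\alpha(f))_\alpha$ and the growth of $M_{\mathfrak V_d f}$, read under the two quantifier regimes.

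The core is that two-sided estimate. For the passage from coefficients to growth I would bound $M_F(\rho)\le\sum_\alpha|c_\alpha(f)|\,\rho^{|\alpha|}/\sqrt{\alpha!}$, collapse the multi-index sum to a sum over $k=|\alpha|$ as in the proof of Proposition \ref{prop1} (now with a Cauchy--Schwarz step to handle $\sqrt{\alpha!}$, the resulting polynomial factors in $k$ being absorbed into the constant), insert $|c_\alpha(f)|\lesssim e^{-rk^{1/2s}}$, and apply Stirling to $\sqrt{k!}$. This turns the problem into maximizing $k\log\rho-rk^{1/2s}-\tfrac12 k\log k$ over $k$; the maximizer sits at $k\asymp(\log\rho)^{2s/(1-2s)}$ and produces an exponent of order $(\log\eabs z)^{1/(1-2s)}$ with an explicit constant $R$ determined by $r$. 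This is exactly the Legendre computation underlying Lemmas \ref{lem:maximum} and \ref{lem:convex}, now run with the weight $k^{1/2s}$ in place of a pure power. For the reverse passage I would use the Cauchy estimate $|c_\alpha(f)|\le\sqrt{\alpha!}\,\rho^{-|\alpha|}M_F(\rho)$, optimize over $\rho$, and feed the outcome through the Young-conjugate argument of Proposition \ref{HermiteEstToCoeffEst} to recover $|c_\alpha(f)|\lesssim e^{-rk^{1/2s}}$ from the growth bound.

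The step I expect to be the main obstacle, and the one demanding real care, is matching the quantifiers: one must pin down precisely how the decay constant $r$ and the growth constant $R$ are paired by the Legendre duality above, and then verify that the Roumieu/Beurling alternative (``for some $r$''/``for every $r$'') is sent to the asserted alternative defining $\maclA_{0,s}(\cc d)$/$\maclA_s(\cc d)$. Since the correspondence $r\mapsto R$ is monotone but of inverse type, this matching is the delicate heart of the proof and must be tracked through the maximization rather than read off by inspection. Once the images are identified, the topological isomorphism follows from the uniformity of the estimates: the forward bound makes $\mathfrak V_d$ continuous and the reverse bound makes $\mathfrak V_d^{-1}$ continuous on each defining seminorm. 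For part (1) this suffices directly, $\maclA_{0,s}(\cc d)$ being a Fr{\'e}chet space as remarked after \eqref{AnalLpEst}, while for part (2) one concludes at the level of the Banach steps defining the inductive limit $\maclA_s(\cc d)$.
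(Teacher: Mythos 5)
Your analytic core is sound, and it is a genuinely different route from the paper's. The paper does not argue pointwise: it invokes Proposition 3.4 of \cite{Toft14} to identify $\maclA _s(\cc d)$ and $\maclA _{0,s}(\cc d)$ with the union, respectively intersection, of the weighted coefficient spaces $\maclH _R^2(\rr d)$, and then proves the two-sided Laplace-type moment estimate \eqref{varthetaEst} for $\vartheta _R(\alpha )$, which converts the $\maclH _R^2$-conditions into the decay $|c_\alpha (f)|\lesssim e^{-r|\alpha |^{1/2s}}$ characterizing the Pilipovi{\'c} spaces; the topology comes for free from the cited proposition. You instead run the max-term estimate on the power series together with Cauchy's inequalities, which is the same Legendre duality carried out pointwise and is more self-contained (your observation that the Stirling term $\tfrac 12 k\log k$ is lower order than $rk^{1/2s}$ because $1/(2s)>1$ is exactly what makes this work, and the Cauchy--Schwarz reduction of the multi-index sum costs only a harmless dilation $\rho \mapsto \sqrt d\, \rho$). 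Both computations produce the same correspondence $R\asymp r^{-2s/(1-2s)}$ between the decay and growth constants.

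The genuine gap is precisely at the step you flag as the ``delicate heart'' and then leave untracked: the quantifier matching. The correspondence $r\mapsto R\asymp r^{-2s/(1-2s)}$ is a decreasing \emph{bijection} of $(0,\infty )$ onto itself, and a bijective reparametrization preserves the alternatives regardless of the direction of monotonicity: unions over the parameter go to unions, intersections to intersections. There is no mechanism by which ``inverse type'' monotonicity converts ``for some $r$'' into ``for every $R$''. Consequently your estimates, carried out correctly, prove $\mathfrak V_d(\bsycalS _{\! s}(\rr d))=\maclA _s(\cc d)$ and $\mathfrak V_d(\bsySig _s(\rr d))=\maclA _{0,s}(\cc d)$ --- Roumieu to ``some $R$'', Beurling to ``every $R$'' --- which is the opposite of the pairing in the statement you set out to prove. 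In fact the pairing as printed in the theorem is a misprint: it contradicts the list in the Introduction, it contradicts the paper's own proof (which maps $\bigcup _{R>0}\maclH _R^2$ onto $\maclA _s(\cc d)$ and $\bigcap _{R>0}\maclH _R^2$ onto $\maclA _{0,s}(\cc d)$, and \eqref{varthetaEst} identifies these with $\bsycalS _{\! s}(\rr d)$ and $\bsySig _s(\rr d)$ respectively), and it is impossible on its face, since the strict inclusions $\bsySig _s(\rr d)\subsetneq \bsycalS _{\! s}(\rr d)$ and $\maclA _{0,s}(\cc d)\subsetneq \maclA _s(\cc d)$ cannot coexist with an injective map sending the smaller space onto the larger image. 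So your method essentially recovers the correct theorem, but your proposal as written asserts the misprinted pairing and appeals to a quantifier-swapping phenomenon that does not exist; had you tracked the maximization as you propose, you would have landed on the type-preserving pairing and should have flagged the statement itself. Your concluding remarks on topology (continuity on each defining seminorm for the Fr{\'e}chet case, and step-to-step boundedness for the inductive limit) are fine once the images are correctly paired.
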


\par

%For the proof we recall the following statement
%
%
%\par

\begin{proof}
%First of all, we recall that the spaces of analytic functions in (1) and (2) can be described using weighted $L^2$-norms, that is as
%%%
%\begin{equation}\label{eq:union}
%   \{F\in A(\cc d): \int_{\mathbb{C}^d}(|F(z)|e^{-R(\log \eabs z)^{\frac 1{1-2s}}})^{2d}
%   \lambda(z)<\infty \ \text{for some}\ R>0\}                                                                                                                                                \end{equation}
%%%
%and 
%   \begin{equation}\label{eq:intersection}
%    \{F\in A(\cc d): \int_{\mathbb{C}^d}(|F(z)|e^{-R(\log \eabs z)^{\frac 1{1-2s}}})^{2d}\lambda(z)<\infty \ \text{for every}\ R>0\}
%\end{equation}
%%%
%respectively. Moreover, if we define
Let
$$
\vartheta_R(\alpha):= \left ( \frac {\pi ^d}{2^d(|\alpha |+d-1)!}
\int _0^\infty e^{-R(\log \eabs r)^{\frac 1{1-2s}}} r^{|\alpha |+d-1}\, dr \right )^{1/2}
$$
and denote by $\maclH _{R}^2(\rr d)$ the set of all $f\in \mascS (\rr d)$ such that
$c_\alpha (f)$ in \eqref{fHermite} satisfies
$$
\nm f{\maclH _{R}^2}:= \left (  \sum _{\alpha \in \nn d}
|c_\alpha (f)\vartheta_R (\alpha )|^2\right )^{1/2}  <\infty .
$$
%
% I prefer to denote the spaces with \mathcal H_R^2
% instead of \mathcal H_R, because having similar notations
% as in other papers. Please let me know if you disagree./JT
%
Then it follows from Proposition 3.4 in \cite{Toft14} that  $\mathfrak V_d$ is a
topological isomorphism from
$$
\bigcup _{R>0}\maclH _{R}^2(\rr d)
$$
onto $\maclA _s(\cc d)$ %(\ref{eq:union})
and from 
$$
\bigcap _{R>0} \maclH _{R}^2(\rr d)
$$
onto 
%\eqref{eq:intersection}
$\maclA _{0,s}(\cc d)$. Hence, to conclude, it suffices to show
the existence of positive constants $c_j$ and $a_j$, $j=1,2$ such that
\begin{equation}\label{varthetaEst}
e^{c_1|\alpha |^{\frac 1{2s}}/R^{a_1}} \lesssim \vartheta_R (\alpha )
\lesssim e^{c_2|\alpha |^{\frac 1{2s}}/R^{a_2}},
\end{equation}
for every $R>0$.

\par

In order to prove \eqref{varthetaEst}, let
$$
m_\alpha=|\alpha |+d-1\quad \text{and}\quad \theta =\frac 1{1-2s}>1.
$$
and fix $1<\mu <\theta$. Then
$$
\vartheta_R (\alpha )^2 \gtrsim \frac{1}{m_\alpha !}\int
_{R{_1,\alpha}}^{R_{2,\alpha}}e^{-R(\log  r)^{\theta}}
r^{m_\alpha}\, dr,
$$
where
$$
\log R_{2,\alpha}=\left ( \frac{m_\alpha}{\theta R} \right )^{\frac{1-2s}{2s}}
\quad \text{and}\quad
\log R_{1,\alpha}=\frac{1}{\mu}\log R_{2,\alpha}.
$$
For $R_{1,\alpha} \leq r \leq R_{2,\alpha}$ we have
\begin{gather*}
-R(\log r)^\theta+m_\alpha \log r \geq m_\alpha
\log R_{1,\alpha}-R (\log R_{2,\alpha})^\theta
\\[1ex]
=
\frac{m_\alpha}{\mu}\left ( \frac{m_\alpha}{\theta R} \right )
^{\frac{1-2s}{2s}}
-R \left (\frac{m_\alpha}{\theta R} \right )^{\frac{1}{2s}}
=
\theta ^{-\frac{1}{2s}}m_\alpha^{\frac{1}{2s}}R^{1-\frac{1}{2s}}
\left ( \frac{\theta}{\mu}-1\right ) \geq \frac{C_1|\alpha |
^{\frac{1}{2s}}}{R^{a_1}},
\end{gather*}
where $a_1=\frac{1}{2s}-1$ and $C_1=\theta
^{-\frac{1}{2s}}(\frac{\theta}{\mu}-1)$.

\par

As $R_{2,\alpha}-R_{1,\alpha}
=R_{2,\alpha}(1-R_{2,\alpha}^{\frac{1}{\mu}-1})$  increases with
$|\alpha|$ and $\log (m_\alpha !)=o(|\alpha|^{\frac{1}{2s}})$ as
$|\alpha|\to \infty,$ we get (with $c_1<C_1$ fixed)  that
$$
\vartheta_R(\alpha)  \geq C_R e^{c_1|\alpha |^{\frac 1{2s}}/R^{a_1}}.
$$ 
  
\par
  
To prove the other inequality we observe that for each $R>0$, there
is a constant $C>0$ such that
\begin{align*}
\vartheta_R^2(\alpha)
&\leq C \int_e^\infty
e^{-\frac{R}{2}(\log r)^\theta}g_\alpha(r)\, dr
\\[1ex]
&\leq C \sup _{r \ge e} \big ( g_\alpha(r)\big )\left ( \int_e^\infty e^{-\frac{R}{2}(\log r)^\theta}\, dr
\right ) ,
\end{align*}
where $g_\alpha(r)= e^{-\frac{R}{2}(\log r)^\theta}r^{m_\alpha}$.

\par
  
By straight-forward computations it follows that $g_\alpha (r)$
when $r\ge e$, attains its global maximum for
$$
r_\alpha=\exp \left ( \left (\frac {2m_\alpha}{\theta R}  \right )
^{\frac {1-2s}{2s}} \right ),
$$
and that
$$
g_\alpha(r_\alpha) = e^{c_2m_\alpha^{\frac 1{2s}}/R^{a_2}},
$$
where
$$
c_2=2^{\frac 1{2s} -1}\theta ^{-\frac 1{2s}} (\theta -1)
\quad \text{and}\quad
a_2= \frac 1{2s}-1.
$$
Hence, by replacing $c_2$ by a larger constant, if necessary,
we get the desired inequality.
\end{proof}

\par

\end{document}